\numberwithin{equation}{section}
\newtheorem{teo}{Theorem }[section]
\newtheorem{lem}[teo]{Lemma}
\newtheorem{rem}{Remark}
\newcommand{\N}{{\mathbb N}}
\DeclareMathOperator*{\esssup}{ess\,sup}
\begin{document}


\title[ Higher-order Stationary Dispersive Equations ]
      {Formulation of problems for stationary dispersive equations of higher orders  on bounded intervals with general boundary conditions }
      
\author{N. A. Larkin	\& 
J. Luchesi$^\dag$}

\address{Nikolai A. Larkin \newline
Departamento de Matem\'{a}tica, Universidade Estadual de Maring\'{a},
Av. Colombo 5790: Ag\^{e}ncia UEM, 87020-900, Maring\'{a}, PR, Brazil}
\email{nlarkine@uem.br}

\address{Jackson Luchesi \newline
Departamento de Matem\'{a}tica, Universidade Tecnol\'{o}gica Federal do Paran\'{a} - C\^{a}mpus Pato Branco, Via do Conhecimento Km 1, 85503-390, Pato Branco, PR, Brazil} \email{jacksonluchesi@utfpr.edu.br}

\keywords {Stationary dispersive equations, General boundary conditions, Regular solutions.}
\thanks{}
\
\thanks{\it {2010 Mathematics Subject Classification: 34B05.} }
\thanks{$^\dag$ Corresponding author}

\begin{abstract}
Boundary value problems for linear stationary dispersive equations of order $2l+1$, $l\in \mathbb{N}$ have been considered on finite intervals $(0,L)$. The existence and uniqueness of regular solutions have been established for general linear boundary conditions.
\end{abstract}

\maketitle

\section{Introduction}
This work concerns solvability of boundary-value problems for  linear stationary dispersive equations  on bounded intervals
\begin{equation}\label{i1}
\lambda u+\sum_{j=1}^{l}(-1)^{j+1}D_x^{2j+1}u=f(x), \,\,\, x\in (0,L)\,\,\, l\in\mathbb{N},
\end{equation}
where $\lambda, L$ are real positive numbers and $f$ is a given function. This class of stationary equations appears naturally while one wants to solve a
corresponding evolution equation
\begin{equation}\label{i2}
 u_t+\sum_{j=1}^{l}(-1)^{j+1}D_x^{2j+1}u+uD_xu=0, \,\,\, x\in (0,L)\,\,\, t>0
\end{equation}
making use of the semigroup theory. This equation includes as special cases classical dispersive equations: when $l=1$, we have the Korteweg-de Vries (KdV) equation \cite{jeffrey,kato} and for $l=2$ the Kawahara equation \cite{biagioni,kawa,marcio}. There is a number of papers dedicated to 
initial-boundary value problems for dispersive equations (which
included KdV and Kawahara equations) posed on bounded domains,
\cite{boutet,bubnov,bubnov1,chile,doronin2,familark,kramer,lar2}. Dispersive equations such as KdV and Kawahara equations have been
deduced for unbounded regions of wave propagations, however, if
one is interested in implementing numerical schemes to calculate
solutions in these regions, there arises the issue of cutting off
a spatial domain approximating unbounded domains by bounded ones.
In this occasion, some boundary conditions are needed to specify
the solution. Therefore, precise mathematical analysis of mixed
problems in bounded domains for dispersive equations is welcome
and attracts attention of specialists in this area,
\cite{bona4,boutet,bubnov,bubnov1,doronin2,familark,kramer,lar2}. Last years, publications on dispersive equations of higher
orders appeared \cite{familark,linponce,ponce1}. Here, we propose \eqref{i1} as a
stationary analog of \eqref{i2} because the last equation includes
classical models such as the KdV and Kawahara equations.

As a rule, simple boundary conditions at $x=0$ and
$x=L$ such as $D^iu(0)=D^iu(L)=D^lu(L)=0$, $i=0,\ldots, l-1$  for \eqref{i1} were imposed, see \cite{larluch0,larluch1}. Different kind of boundary conditions for KdV and Kawahara equations was considered in \cite{capis,kramer,larluchM,marcioM}. We must mention \cite{volevich} where general mixed problems for linear multidimensional $(2b+1)$-hyperbolic equations were studied by means of functional analisys methods. Obviously, boundary conditions for \eqref{i1} are the same as for \eqref{i2}. Because of that, study of boundary value problems for \eqref{i1} helps to understand solvability of initial-boundary value problems for \eqref{i2}.

The goal of our work is to formulate a correct general boundary value problem for \eqref{i1} and to prove the existence and uniqueness of regular solutions.

Our paper has the following structure: Chapter 1 is Introduction. Chapter 2 contains notations and auxiliary facts. In Chapter 3, formulation of problems to be considered is given. In Chapter 4,  the existence and uniqueness of regular solution have been established.

\section{Notations and auxiliary facts}
Let $x\in (0,L),\;\; D^i=D_x^i=\frac{\partial^i}{\partial x^i},\;i\in {\N};\;D=D^1.$  As in \cite{Adams} p. 23, we denote for scalar functions $f(x)$  the Banach space $L^p(0,L),\;1\leq p\leq+\infty $ with the norm: 
$$\| f \|_{L^p(0,L)}^p = \int_0^L |f(x)|^p\, dx,\; p\in[1,+\infty),\;
\|f\|_{\infty}=\esssup_{x\in(0, L)}|f(x)|.$$
For $p=2,\; L^2(0,L)$ is a Hilbert space  with the scalar product 
$$(u,v)=\int_0^L u(x)v(x) dx\; \mbox{and the norm}\; \|u\|^2=\int_0^L |u(x)|^2 dx.$$
The Sobolev space $W^{m,p}(0,L),\;m\in {\N}$ is a Banach space with the norm:
$$\| u \|_{W^{m,p}(0,L)}^p = \sum_{0 \leq |\alpha| \leq m} \|D^\alpha u \|_{L^p(0,L)}^p, \; 1\leq p<+\infty.$$
When $p=2,\;W^{m,2}(0,L)=H^m(0,L)$ is a Hilbert space with the following scalar product and the norm:
$$((u,v))_{H^m(0,L)}=\sum_{0\leq|j|\leq m}(D^j u,D^j v),\; \|u\|^2_{H^m(0,L)}=\sum_{0\leq|j|\leq m}\|D^j u\|^2.$$

Let $C_0^{\infty}(0,L)$ be the space of $C^{\infty}$ functions with a compact support in $(0,L)$. The closure of $C_0^{\infty}(0,L)$ in the space $W^{m,p}(0,L)$ is denoted by $W^{m,p}_0(0,L)$ and ($H^m_0(0,L)$ when $p=2$). For any space of functions, defined on an interval $(0,L)$, we omit the symbol $(0,L)$, for example, $L^p=L^p(0,L)$, $H^m=H^m(0,L)$, $H_0^m=H_0^m(0,L)$ etc.
\begin{lem} Let $u\in C^{2j+1}([0,L])$, $j\in \mathbb{N}$. Then
	\begin{align}\label{a1}
	(D^{2j+1}u,u)&=\sum_{k=1}^{j}(-1)^{k+1}D^{k-1}uD^{(2j+1)-k}u\Big|_{0}^{L}+(-1)^j\frac{1}{2}(D^ju)^2\Big|_{0}^{L},\\(D^{2j+1}u,xu)&=\sum_{k=1}^{j}(-1)^{k+1}xD^{k-1}uD^{(2j+1)-k}u\Big|_{0}^{L}+(-1)^j\frac{x}{2}(D^ju)^2\Big|_{0}^{L}\notag\\&+\sum_{k=1}^{j}(-1)^{k}kD^{k-1}uD^{2j-k}\Big|_{0}^{L}+(-1)^{j+1}\frac{(2j+1)}{2}\|D^ju\|^2.\label{a2}
	\end{align}
\end{lem}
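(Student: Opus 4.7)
\smallskip

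\noindent\textbf{Proof proposal.} Both identities are proved by $j$ successive integrations by parts on $(0,L)$, organized so that the derivative order is shifted one step at a time from $D^{2j+1}u$ onto the second factor.

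For \eqref{a1}, I would verify by induction on $k$ (for $k=1,\dots,j$) that
\begin{equation*}
(D^{2j+1}u,u)=\sum_{i=1}^{k}(-1)^{i+1}D^{i-1}u\,D^{(2j+1)-i}u\Big|_{0}^{L}+(-1)^{k}(D^{2j+1-k}u,D^{k}u).
\end{equation*}
The base $k=1$ is one integration by parts; the inductive step integrates $(D^{2j+1-k}u,D^{k}u)$ by parts again, which simply moves one derivative and flips the sign. Stopping at $k=j$ leaves the residual integral $(-1)^{j}(D^{j+1}u,D^{j}u)$, which equals $(-1)^{j}\tfrac{1}{2}(D^{j}u)^{2}\big|_{0}^{L}$ since it is a perfect derivative. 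This gives \eqref{a1}.

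For \eqref{a2}, the strategy is the same but with $xu$ in place of $u$. The analogous induction produces
\begin{equation*}
(D^{2j+1}u,xu)=\sum_{k=1}^{j}(-1)^{k+1}D^{k-1}(xu)\,D^{(2j+1)-k}u\Big|_{0}^{L}+(-1)^{j}(D^{j+1}u,D^{j}(xu)).
\end{equation*}
Now I apply the Leibniz rule $D^{k-1}(xu)=xD^{k-1}u+(k-1)D^{k-2}u$ inside the boundary sum; the first piece reproduces the expected $xD^{k-1}uD^{(2j+1)-k}u$ boundary terms, while the second piece contributes $\sum_{k=2}^{j}(-1)^{k+1}(k-1)D^{k-2}uD^{(2j+1)-k}u\big|_{0}^{L}$, which after reindexing $m=k-1$ becomes $\sum_{m=1}^{j-1}(-1)^{m}mD^{m-1}uD^{2j-m}u\big|_{0}^{L}$.

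The residual integral splits as $(-1)^{j}[(D^{j+1}u,xD^{j}u)+j(D^{j+1}u,D^{j-1}u)]$. I compute the first term by writing $D^{j+1}u\cdot xD^{j}u=\tfrac{x}{2}D((D^{j}u)^{2})$ and integrating by parts once, producing $\tfrac{x}{2}(D^{j}u)^{2}\big|_{0}^{L}-\tfrac{1}{2}\|D^{j}u\|^{2}$; for the second I integrate by parts once more to get $D^{j-1}uD^{j}u\big|_{0}^{L}-\|D^{j}u\|^{2}$. Combining, the $\|D^{j}u\|^{2}$ contributions add up to $(-1)^{j+1}\tfrac{2j+1}{2}\|D^{j}u\|^{2}$, and the extra boundary term $(-1)^{j}jD^{j-1}uD^{j}u\big|_{0}^{L}$ is exactly the missing $k=j$ summand that merges with $\sum_{m=1}^{j-1}(-1)^{m}mD^{m-1}uD^{2j-m}u\big|_{0}^{L}$ to yield the full $\sum_{k=1}^{j}(-1)^{k}kD^{k-1}uD^{2j-k}u\big|_{0}^{L}$ appearing in \eqref{a2}.

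The only real obstacle is the bookkeeping of signs and the reindexing that reveals how the two sources of the $\sum_{k=1}^{j}(-1)^{k}kD^{k-1}uD^{2j-k}u$ sum (the Leibniz contribution and the terminal integration by parts) fit together; everything else is standard repeated integration by parts.
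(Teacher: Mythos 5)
Your proposal is correct, and it follows the same route the paper indicates: the paper's proof is stated only as ``integration by parts and mathematical induction,'' and your argument is a complete, correctly bookkept instantiation of exactly that strategy (with the induction run over the number of integrations by parts, the Leibniz expansion of $D^{k-1}(xu)$, and the terminal residuals all checked accurately). Note only that the statement's term $D^{2j-k}$ in \eqref{a2} is a typo for $D^{2j-k}u$, which your derivation implicitly and correctly supplies.
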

\begin{proof} The proof is based on  integration by parts and mathematical induction.
\end{proof}	
\begin{lem} Let $u\in C^{2l+1}([0,L])$, $l\in \mathbb{N}$. Then
	\begin{align}\label{a3}
	\sum_{j=1}^{l}(-1)^{j+1}(D^{2j+1}u,u)&=\sum_{i=0}^{l-1}D^{i}u\Big(\sum_{k=1}^{l-i}(-1)^{k+1}D^{2k+i}u\Big)\Big|_{0}^{L} \notag\\&-\frac{1}{2}\sum_{j=1}^{l}(D^{j}u)^2\Big|_{0}^{L}.
	\end{align}
	\end{lem}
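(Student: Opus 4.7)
The plan is to apply the first identity \eqref{a1} of Lemma 2.1 term by term to each $(D^{2j+1}u,u)$ and then reorganize the resulting double boundary sum by reindexing.

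First I would substitute \eqref{a1} into the left-hand side, obtaining
\begin{align*}
\sum_{j=1}^{l}(-1)^{j+1}(D^{2j+1}u,u) &= \sum_{j=1}^{l}\sum_{k=1}^{j}(-1)^{j+k}\, D^{k-1}u\, D^{2j+1-k}u\Big|_{0}^{L} \\
&\quad + \sum_{j=1}^{l}(-1)^{2j+1}\tfrac{1}{2}(D^{j}u)^{2}\Big|_{0}^{L}.
\end{align*}
Since $(-1)^{2j+1}=-1$, the second sum immediately collapses to $-\tfrac{1}{2}\sum_{j=1}^{l}(D^{j}u)^{2}\big|_{0}^{L}$, which is exactly the second term in \eqref{a3}. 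So the whole task reduces to matching the double sum with the first term of \eqref{a3}.

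Next I would change indices in the double sum: setting $i=k-1$, so that $k=i+1$ and $2j+1-k=2j-i$, gives
$$\sum_{j=1}^{l}\sum_{i=0}^{j-1}(-1)^{j+i+1}\, D^{i}u\, D^{2j-i}u\Big|_{0}^{L}.$$
Switching the order of summation (for fixed $i\in\{0,\dots,l-1\}$, $j$ runs from $i+1$ to $l$) and then setting $k'=j-i$, so that $j=k'+i$, $2j-i=2k'+i$, and $(-1)^{j+i+1}=(-1)^{k'+2i+1}=(-1)^{k'+1}$, yields
$$\sum_{i=0}^{l-1}\sum_{k'=1}^{l-i}(-1)^{k'+1}\, D^{i}u\, D^{2k'+i}u\Big|_{0}^{L} \;=\; \sum_{i=0}^{l-1}D^{i}u\Big(\sum_{k=1}^{l-i}(-1)^{k+1}D^{2k+i}u\Big)\Big|_{0}^{L},$$
which is precisely the first term of \eqref{a3}.

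There is no real analytic obstacle here; the entire proof is bookkeeping. The only delicate point is checking the sign $(-1)^{j+i+1}$ after the change of variable $k'=j-i$, which simplifies because $(-1)^{2i}=1$. I would present the argument as the two reindexings above, with an explicit remark that $(-1)^{2j+1}=-1$ handles the diagonal squared terms.
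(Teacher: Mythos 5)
Your proposal is correct, but it takes a genuinely different route from the paper. The paper proves \eqref{a3} by induction on $l$: it checks $l=1$ directly from \eqref{a1}, then in the inductive step adds the new term $(-1)^n(D^{2n+3}u,u)$, expands it via \eqref{a1}, and absorbs the resulting boundary terms into the two sums of the induction hypothesis (peeling off the $i=n$ summand $D^nuD^{n+2}u\big|_0^L$ and extending the inner sums by one term). You instead sum \eqref{a1} over $j$ with the weights $(-1)^{j+1}$ all at once and reorganize the double boundary sum by the substitutions $i=k-1$ and $k'=j-i$; the sign bookkeeping $(-1)^{j+1}(-1)^{k+1}=(-1)^{j+k}$, $(-1)^{k'+2i+1}=(-1)^{k'+1}$, and $(-1)^{2j+1}=-1$ is all verified correctly, and the range $j=i+1,\dots,l$ correctly becomes $k'=1,\dots,l-i$. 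Your direct computation makes the combinatorial content of \eqref{a3} transparent (the inner index $k$ is just $j-i$) and arguably yields a shorter, more self-contained argument; the paper's induction trades that transparency for a mechanical verification whose pattern is then reused almost verbatim in the proof of \eqref{a4}, which is presumably why the authors chose it. Either proof is acceptable.
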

\begin{proof} The case $l=1$ follows by \eqref{a1}. Suppose assertion \eqref{a3} is valid for some integer $n\geq 1$ and assume $u\in C^{2n+3}([0,L])$. By induction hypothesis and \eqref{a1}, we get
	\begin{align*}
&\sum_{j=1}^{n+1}(-1)^{j+1}(D^{2j+1}u,u)=\sum_{j=1}^{n}(-1)^{j+1}(D^{2j+1}u,u)+(-1)^n(D^{2n+3}u,u)\\
&=\sum_{i=0}^{n-1}D^{i}u\Big(\sum_{k=1}^{n-i}(-1)^{k+1}D^{2k+i}u\Big)\Big|_{0}^{L}-\frac{1}{2}\sum_{j=1}^{n}(D^{j}u)^2\Big|_{0}^{L}\\
&+\sum_{k=1}^{n+1}(-1)^{n+k+1}D^{k-1}uD^{(2n+3)-k}u\Big|_{0}^{L}-\frac{1}{2}(D^{n+1}u)^2\Big|_{0}^{L}\\
&=\sum_{i=0}^{n-1}D^{i}u\Big(\sum_{k=1}^{n-i}(-1)^{k+1}D^{2k+i}u+(-1)^{n-i}D^{2n+2-i}u\Big)\Big|_{0}^{L}\\
&+D^nuD^{n+2}u\Big|_{0}^{L}-\frac{1}{2}\sum_{j=1}^{n+1}(D^{j}u)^2\Big|_{0}^{L}\\
&=\sum_{i=0}^{n}D^{i}u\Big(\sum_{k=1}^{n+1-i}(-1)^{k+1}D^{2k+i}u\Big)\Big|_{0}^{L}-\frac{1}{2}\sum_{j=1}^{n+1}(D^{j}u)^2\Big|_{0}^{L}.
\end{align*}
This implies \eqref{a3} for all $l\in \mathbb{N}$.
\end{proof}	
\begin{lem} Let $u\in C^{2l+1}([0,L])$, $l\in \mathbb{N}$. Then
	\begin{align}\label{a4}
	&\sum_{j=1}^{l}(-1)^{j+1}(D^{2j+1}u,xu)=\sum_{i=0}^{l-1}xD^{i}u\Big(\sum_{k=1}^{l-i}(-1)^{k+1}D^{2k+i}u\Big)\Big|_{0}^{L}\notag\\&+\sum_{i=0}^{l-1}(1+i)D^{i}u\Big(\sum_{k=1}^{l-i}(-1)^{k}D^{2k+i-1}u\Big)\Big|_{0}^{L} -\frac{x}{2}\sum_{j=1}^{l}(D^{j}u)^2\Big|_{0}^{L}\notag\\ &+\sum_{j=1}^{l}\frac{(2j+1)}{2}\|D^ju\|^2.
	\end{align}
\end{lem}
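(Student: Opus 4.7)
The plan is to mirror the induction on $l$ used in the proof of Lemma 2.2, now using formula \eqref{a2} in place of \eqref{a1}. The base case $l=1$ follows directly from \eqref{a2} with $j=1$: its four summands match, in order, the $i=0$ instance of each of the two boundary sums in \eqref{a4}, together with the terms $-\tfrac{x}{2}(Du)^{2}\big|_{0}^{L}$ and $\tfrac{3}{2}\|Du\|^{2}$.

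For the inductive step, I would assume \eqref{a4} for $l=n\geq 1$, take $u\in C^{2n+3}([0,L])$, and split
\begin{equation*}
\sum_{j=1}^{n+1}(-1)^{j+1}(D^{2j+1}u,xu)=\sum_{j=1}^{n}(-1)^{j+1}(D^{2j+1}u,xu)+(-1)^{n}(D^{2n+3}u,xu).
\end{equation*}
The induction hypothesis disposes of the first piece, and \eqref{a2} with $j=n+1$, multiplied by $(-1)^{n}$, disposes of the second. After the substitution $i=k-1$ in the boundary sums produced by \eqref{a2}, the new $x$-weighted contribution becomes $\sum_{i=0}^{n}(-1)^{n-i}xD^{i}uD^{2n+2-i}u\big|_{0}^{L}$ and the new non-$x$ contribution becomes $\sum_{i=0}^{n}(-1)^{n-i+1}(1+i)D^{i}uD^{2n+1-i}u\big|_{0}^{L}$. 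The remaining fresh terms $-\tfrac{x}{2}(D^{n+1}u)^{2}\big|_{0}^{L}$ and $\tfrac{2n+3}{2}\|D^{n+1}u\|^{2}$ simply extend the last two blocks of \eqref{a4} from the range $j\le n$ to $j\le n+1$, so only the two boundary double sums require real attention.

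The main obstacle is the indexing. For each $i\in\{0,\dots,n-1\}$ the new $x$-weighted term supplies exactly the missing $k=n+1-i$ summand of the inner sum $\sum_{k=1}^{n-i}(-1)^{k+1}D^{2k+i}u$, promoting its upper limit to $n+1-i$; the $i=n$ case then furnishes a brand-new outer summand $xD^{n}uD^{n+2}u\big|_{0}^{L}$ that is precisely the $i=n$, $k=1$ entry of the enlarged outer range $\{0,\dots,n\}$ in \eqref{a4} for $l=n+1$. The same bookkeeping, with parity $(-1)^{n-i+1}$ and weight $(1+i)$, handles the non-$x$ sum. The delicate point is only to check that the sign factors $(-1)^{n+k+1}$ and $(-1)^{n+k}$ coming from \eqref{a2} convert, under $i=k-1$, into the $(-1)^{k+1}$ and $(-1)^{k}$ required by the telescoped form of \eqref{a4}; once this is verified, the induction closes and \eqref{a4} holds for all $l\in\N$.
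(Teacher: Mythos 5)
Your proposal is correct and follows essentially the same route as the paper: induction on $l$, with the base case and the inductive step both supplied by \eqref{a2}, splitting off the $(n+1)$-th term and re-indexing the two boundary sums via $i=k-1$ so that each contributes the missing $k=n+1-i$ summand plus the new $i=n$ outer term. Your sign bookkeeping ($(-1)^{n+k+1}\mapsto(-1)^{k+1}$ and $(-1)^{n+k}\mapsto(-1)^{k}$ under the telescoping) matches the paper's computation exactly.
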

\begin{proof} The case $l=1$ follows by \eqref{a2}. Suppose assertion \eqref{a4} is valid for some integer $n\geq 1$ and assume $u\in C^{2n+3}([0,L])$. Induction hypothesis and \eqref{a2} imply
\begin{align*}
&\sum_{j=1}^{n+1}(-1)^{j+1}(D^{2j+1}u,xu)=\sum_{j=1}^{n}(-1)^{j+1}(D^{2j+1}u,xu)\\&+(-1)^n(D^{2n+3}u,xu)=\sum_{i=0}^{n-1}xD^{i}u\Big(\sum_{k=1}^{n-i}(-1)^{k+1}D^{2k+i}u\Big)\Big|_{0}^{L}\\&+\sum_{i=0}^{n-1}(1+i)D^{i}u\Big(\sum_{k=1}^{l-i}(-1)^{k}D^{2k+i-1}u\Big)\Big|_{0}^{L} -\frac{x}{2}\sum_{j=1}^{n}(D^{j}u)^2\Big|_{0}^{L}\\ &+\sum_{j=1}^{n}\frac{(2j+1)}{2}\|D^ju\|^2+\sum_{k=1}^{n+1}(-1)^{n+k+1}xD^{k-1}uD^{(2n+3)-k}u\Big|_{0}^{L}\\&-\frac{x}{2}(D^{n+1}u)^2\Big|_{0}^{L}+\sum_{k=1}^{n+1}(-1)^{n+k}kD^{k-1}uD^{(2n+2)-k}\Big|_{0}^{L}+\frac{(2n+3)}{2}\|D^{n+1}u\|^2\\&=\sum_{i=0}^{n-1}xD^{i}u\Big(\sum_{k=1}^{n-i}(-1)^{k+1}D^{2k+i}u+(-1)^{n-i}D^{2n+2-i}\Big)\Big|_{0}^{L}+xD^nuD^{n+2}u\Big|_{0}^{L}\\&+\sum_{i=0}^{n-1}(1+i)D^{i}u\Big(\sum_{k=1}^{n-i}(-1)^{k}D^{2k+i-1}u+(-1)^{n-i+1}D^{2n+1-i}\Big)\Big|_{0}^{L}\\&-(1+n)D^nuD^{n+1}u\Big|_{0}^{L}-\frac{x}{2}\sum_{j=1}^{n+1}(D^{j}u)^2\Big|_{0}^{L}+\sum_{j=1}^{n+1}\frac{(2j+1)}{2}\|D^ju\|^2\\
&=\sum_{i=0}^{n}xD^{i}u\Big(\sum_{k=1}^{n+1-i}(-1)^{k+1}D^{2k+i}u\Big)\Big|_{0}^{L}
\end{align*}
\begin{align*}
&+\sum_{i=0}^{n}(1+i)D^{i}u\Big(\sum_{k=1}^{n+1-i}(-1)^{k}D^{2k+i-1}u\Big)\Big|_{0}^{L}-\frac{x}{2}\sum_{j=1}^{n+1}(D^{j}u)^2\Big|_{0}^{L}\\&+\sum_{j=1}^{n+1}\frac{(2j+1)}{2}\|D^ju\|^2.
\end{align*}	
This proves \eqref{a4} for all $l\in \mathbb{N}$.
\end{proof}	
\begin{lem}(See \cite{niren}, p. 125).
	Suppose $u$ and $D^mu$, $m\in\mathbb{N}$ belong to $L^2(0,L)$. Then for the derivatives $D^iu$, $0\leq i<m$, the following inequality holds: 
	\begin{equation}\label{a5}
	\|D^iu\|\leq C_1\|D^mu\|^{\frac{i}{m}}\|u\|^{1-\frac{i}{m}}+C_2\|u\|,
	\end{equation}
where $C_1$, $C_2$ are constants depending only on $L$, $m$, $i$.
	\end{lem}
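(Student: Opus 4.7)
The plan is to reduce the bounded interval case to the classical Gagliardo--Nirenberg inequality on the whole real line, which admits a one-line Fourier-analytic proof.

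First, I would construct a compactly supported extension $\tilde u \in H^m(\mathbb{R})$ of $u$ satisfying the two-sided bounds
\[
\|\tilde u\|_{L^2(\mathbb{R})} \leq C\|u\|_{L^2(0,L)}, \qquad \|D^m \tilde u\|_{L^2(\mathbb{R})} \leq C\bigl(\|D^m u\|_{L^2(0,L)} + \|u\|_{L^2(0,L)}\bigr),
\]
with $C=C(L,m)$. A standard way is the reflection (Calder\'on/Stein) extension across both endpoints, multiplied by a smooth cutoff equal to $1$ on $[0,L]$ and supported in a fixed larger interval; the cutoff is responsible, via the Leibniz rule, for the appearance of the $\|u\|$ term in the bound on $\|D^m\tilde u\|$.

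Second, on the line I would apply Plancherel and H\"older with exponents $m/i$ and $m/(m-i)$ to get, for every $v\in H^m(\mathbb{R})$,
\[
\|D^i v\|_{L^2(\mathbb{R})}^2 = \int_{\mathbb{R}} |\xi|^{2i}|\hat v(\xi)|^2 d\xi = \int_{\mathbb{R}} (|\xi|^{2m}|\hat v|^2)^{i/m}(|\hat v|^2)^{1-i/m} d\xi \leq \|D^m v\|^{2i/m}\|v\|^{2(1-i/m)}.
\]
Applying this to $\tilde u$ and using $\|D^i u\|_{L^2(0,L)} \leq \|D^i \tilde u\|_{L^2(\mathbb{R})}$ together with the extension bounds from Step 1 yields
\[
\|D^i u\| \leq \bigl(\|D^m u\| + \|u\|\bigr)^{i/m}\|u\|^{1-i/m}
\]
up to a constant depending only on $L,m,i$.

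Finally, I would apply the elementary subadditivity $(a+b)^{\theta}\leq a^{\theta}+b^{\theta}$ for $\theta=i/m\in[0,1]$ to separate the two terms on the right, obtaining $\|D^m u\|^{i/m}\|u\|^{1-i/m}+\|u\|$, which is exactly \eqref{a5}. The main obstacle is the first step: producing a genuine extension operator $u\mapsto \tilde u$ with a $C^m$-regular transition across the endpoints, so that differentiating under the cutoff does not lose control of $\|D^m\tilde u\|$; this is the classical technical ingredient behind the interpolation inequality and is where the $C_2\|u\|$ term in \eqref{a5} comes from. An alternative, more self-contained route would be to prove first the one-step bound $\|D^k u\|^2 \leq C(\|D^{k+1}u\|\,\|D^{k-1}u\| + \|D^{k-1}u\|^2)$ by integration by parts against a suitable cutoff, and then iterate it in $k$ while optimizing with Young's inequality, but this is longer and duplicates Nirenberg's argument.
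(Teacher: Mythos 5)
The paper gives no proof of this lemma at all --- it is stated with a citation to Nirenberg and nothing more --- so there is no in-house argument to compare yours against; I can only assess your proposal on its own terms. Your overall strategy (extend to $\mathbb{R}$, interpolate on the Fourier side via Plancherel and H\"older, then split the result with $(a+b)^{\theta}\le a^{\theta}+b^{\theta}$) is a standard and legitimate route, and your Steps 2 and 3 are correct as written.

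The genuine gap is in Step 1. The bound $\|D^m\tilde u\|_{L^2(\mathbb{R})}\le C(\|D^mu\|+\|u\|)$ does not follow from the reflection-plus-cutoff construction as you describe it. The higher-order reflection does preserve each derivative order separately, but once you multiply by the cutoff $\eta$ the Leibniz rule gives $D^m(\eta v)=\sum_{j=0}^{m}\binom{m}{j}D^{m-j}\eta\,D^{j}v$, so the right-hand side of the extension bound necessarily contains all the intermediate norms $\|D^ju\|$ for $0<j<m$, not just $\|u\|$. Controlling those intermediate norms by $\|D^mu\|+\|u\|$ is precisely the additive form of the inequality you are trying to prove, so the argument as written is circular. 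The standard repair is to first establish the additive estimate $\|D^ju\|\le\varepsilon\|D^mu\|+C(\varepsilon,L,m)\|u\|$ directly on $(0,L)$ by integration by parts and induction on $m$ (essentially the ``one-step bound'' you mention in passing at the end), use it to legitimize the extension bound, and only then run the Fourier argument to upgrade to the multiplicative form \eqref{a5}. Alternatively, Nirenberg's own proof avoids extension entirely: it proves the unit-scale inequality on an interval by elementary means and then obtains the multiplicative exponents by partitioning $(0,L)$ into subintervals of length comparable to $(\|u\|/\|D^mu\|)^{1/m}$. Either way, the elementary integration-by-parts ingredient you relegated to a closing remark is not an optional alternative; it is the missing step that makes your Step 1 legitimate.
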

\section{Formulation of the problem}
Let $L, \lambda$ be real positive numbers and $l\in \mathbb{N}$. Consider the higher-order stationary dispersive equation
\begin{equation}\label{f1}
\lambda u+\sum_{j=1}^{l}(-1)^{j+1}D^{2j+1}u=f(x), \,\,\, x\in (0,L)
\end{equation}
subject to a correct set of boundary conditions ($l$ conditions at $x=0$ and $l+1$ conditions at $x=L$, see \cite{lar})\\
$\mathbf{l=1}$:
\begin{align}\label{f3}
u(0)=u(L)=Du(L)=0;
\end{align}
 $\mathbf{l\geq 2}$:
\begin{align} \label{f4}
&u(0)=u(L)=0,\\
&D^iu(0)=\sum_{j=1}^{l}a_{ij}D^ju(0),\,\,\, i=l+1,\ldots, 2l-1,\label{f5}\\
&D^iu(L)=\sum_{j=1}^{l-1}b_{ij}D^ju(L),\,\,\, i=l,\ldots, 2l-1,  \label{f6}
\end{align}
where $a_{ij}$, $b_{ij}$ are real constants and $f\in L^2(0,L)$ is a given function. Assumptions on the coefficients imply estimate in $L^2$-norm. In other words, multiplying \eqref{f1} by $u$ and integrating over $(0,L)$, we get
$$\lambda\|u\|^2+\sum_{j=1}^{l}(-1)^{j+1}(D^{2j+1}u,u)\leq \|f\|\|u\|.$$ 
A natural way to obtain $\|u\|\leq \frac{1}{\lambda}\|f\|$ is to choose $a_{ij}$, $b_{ij}$ such that $I=\sum_{j=1}^{l}(-1)^{j+1}(D^{2j+1}u,u)\geq0$.
When $l=2$, \eqref{f4}-\eqref{f6} become
\begin{align}\label{f7}
\begin{split}
&u(0)=u(L)=0,\,\,\, D^3u(0)=a_{31}Du(0)+a_{32}D^2u(0),\\& D^2u(L)=b_{21}Du(L), \,\, D^3u(L)=b_{31}Du(L).
\end{split}
\end{align}
Substituting \eqref{f7} into \eqref{a3}, we obtain
\begin{align*}
&I=\Big(b_{31}-\frac{1}{2}-\frac{b_{21}^2}{2}\Big)(Du(L))^2+\Big(-a_{31}+\frac{1}{2}\Big)(Du(0))^2\\&-a_{32}Du(0)D^2u(0)+\frac{1}{2}(D^2u(0))^2.
\end{align*}
By the Cauchy inequality, we get
\begin{align*}
&I\geq\Big(b_{31}-\frac{1}{2}-\frac{b_{21}^2}{2}\Big)(Du(L))^2+\Big(-a_{31}+\frac{1}{2}-a_{32}^2\Big)(Du(0))^2\\&+\Big(\frac{1}{2}-\frac{1}{4}\Big)(D^2u(0))^2.
\end{align*}
In order to obtain $I\geq 0$, we must have 
\begin{align*}
&B_1=b_{31}-\frac{1}{2}-\frac{b_{21}^2}{2}> 0,\,\,\, A_1=-a_{31}+\frac{1}{2}-a_{32}^2> 0.
\end{align*}
This implies that $b_{31}> \frac{1}{2}$, $a_{31}<\frac{1}{2}$, and $|a_{32}|,|b_{21}|$ should be sufficiently small or zero. If $a_{32}=b_{21}=0$, then \eqref{f7} takes the following form
\begin{align}\label{f8}
\begin{split}
&u(0)=u(L)=D^2u(L)=0,\\
&D^3u(0)=a_{31}Du(0),\,\,\, D^3u(L)=b_{31}Du(L)
\end{split}
\end{align}
with 
\begin{align}\label{f9}
&B_1=b_{31}-\frac{1}{2}> 0,\,\,\, A_1=-a_{31}+\frac{1}{2}> 0, \,\,\, A_2=\frac{1}{4}.
\end{align}
For $l=3$, \eqref{f4}-\eqref{f6} become
\begin{align}\label{f10}
\begin{split}
&u(0)=u(L)=0,\\& D^4u(0)=a_{41}Du(0)+a_{42}D^2u(0)+a_{43}D^3u(0),\\& D^5u(0)=a_{51}Du(0)+a_{52}D^2u(0)+a_{53}D^3u(0), \\ &D^3u(L)=b_{31}Du(L)+b_{32}D^2u(L),\\&
D^4u(L)=b_{41}Du(L)+b_{42}D^2u(L),\\&D^5u(L)=b_{51}Du(L)+b_{52}D^2u(L).
\end{split}
\end{align}
Substituting \eqref{f10} into \eqref{a3}, we obtain
\begin{align*}
&I=\Big(b_{31}-b_{51}-\frac{1}{2}-\frac{b_{31}^2}{2}\Big)(Du(L))^2+\Big(b_{42}-\frac{1}{2}-\frac{b_{32}^2}{2}\Big)(D^2u(L))^2\\&+\Big(b_{32}-b_{52}+b_{41}-b_{31}b_{32}\Big)Du(L)D^2u(L)+\Big(a_{51}+\frac{1}{2}\Big)(Du(0))^2\\&+\Big(-a_{42}+\frac{1}{2}\Big)(D^2u(0))^2+\frac{1}{2}(D^3u(0))^2+(a_{52}-a_{41})Du(0)D^2u(0)\\&+(-1+a_{53})Du(0)D^3u(0)-a_{43}D^2u(0)D^3u(0).
\end{align*}
By the Cauchy inequality, it follows that
\begin{align*}
&I\geq\Big(b_{31}-b_{51}-\frac{1}{2}-b_{31}^2-\frac{1}{2}(|b_{32}|+|b_{52}|+|b_{41}|)\Big)(Du(L))^2\\&+\Big(b_{42}-\frac{1}{2}-b_{32}^2-\frac{1}{2}(|b_{32}|+|b_{52}|+|b_{41}|)\Big)(D^2u(L))^2\\&+\Big(a_{51}-\frac{1}{2}-\frac{1}{2}(|a_{52}|+|a_{41}|+|a_{53}|)\Big)(Du(0))^2\\&+\Big(-a_{42}+\frac{1}{2}-\frac{1}{2}(|a_{52}|+|a_{41}|+|a_{43}|)\Big)(D^2u(0))^2\\&+\Big(\frac{1}{4}-\frac{1}{2}(|a_{53}|+|a_{43}|)\Big)(D^3u(0))^2.
\end{align*}
To have $I\geq0$, the coefficients must satisfy the following inequalies:
\begin{align}\label{f11}
\begin{split}
&B_1=b_{31}-b_{51}-\frac{1}{2}-b_{31}^2-\frac{1}{2}(|b_{32}|+|b_{52}|+|b_{41}|)> 0,\\&
B_2=b_{42}-\frac{1}{2}-b_{32}^2-\frac{1}{2}(|b_{32}|+|b_{52}|+|b_{41}|)> 0,\\& A_1=a_{51}-\frac{1}{2}-\frac{1}{2}(|a_{52}|+|a_{41}|+|a_{53}|)> 0,\\& A_2=-a_{42}+\frac{1}{2}-\frac{1}{2}(|a_{52}|+|a_{41}|+|a_{43}|)> 0,\\&
A_3=\frac{1}{4}-\frac{1}{2}(|a_{53}|+|a_{43}|)> 0.
\end{split}
\end{align}
According to \eqref{f11},   $b_{51}<-\frac{1}{2}$, $b_{42}>\frac{1}{2}$, $a_{51}>\frac{1}{2}$, $a_{42}<\frac{1}{2}$ and the remaining coefficients should be sufficiently small or zero. If we consider these coefficients equal to zero, then \eqref{f10} becomes
\begin{align}\label{f12}
\begin{split}
&u(0)=u(L)=D^3u(L)=0,\\&
 D^4u(0)=a_{42}D^2u(0),\,\,\, D^5u(0)=a_{51}Du(0)\\&
 D^4u(L)=b_{42}D^2u(L),\,\,\, D^5u(L)=b_{51}Du(L)
\end{split}
\end{align}
with 
\begin{align}\label{f13}
\begin{split}
&B_1=-b_{51}-\frac{1}{2}>0,\,\,\, B_2=b_{42}-\frac{1}{2}>0,\\&
A_1=a_{51}-\frac{1}{2}>0,\,\,\, A_2=-a_{42}+\frac{1}{2}>0, \,\,\, A_3=\frac{1}{4}.
\end{split}
\end{align}
Let $l\geq 4$. By \eqref{a3},
\begin{align}\label{f14}
&I=\sum_{i=0}^{l-1}D^{i}u(L)\Big(\sum_{k=1}^{l-i}(-1)^{k+1}D^{2k+i}u(L)\Big)-\frac{1}{2}\sum_{i=0}^{l-1}(D^{i+1}u(L))^2\\&+\sum_{i=0}^{l-1}D^{i}u(0)\Big(\sum_{k=1}^{l-i}(-1)^{k}D^{2k+i}u(0)\Big)+\frac{1}{2}\sum_{i=0}^{l-1}(D^{i+1}u(0))^2.\label{f15}
\end{align}
{\bf Conditions at $\mathbf{x=L}$:} Substituting \eqref{f4}-\eqref{f6} into \eqref{f14}, we find
\begin{align*}
&I_L=\sum_{i=0}^{l-1}D^{i}u(L)\Big(\sum_{k=1}^{l-i}(-1)^{k+1}D^{2k+i}u(L)\Big)-\frac{1}{2}\sum_{i=0}^{l-1}(D^{i+1}u(L))^2\\&=\sum_{i=1}^{l-1}\,\,\Big[\sum_{\mathclap{\substack{k=1\\ 2k+i\leq l-1}}}(-1)^{k+1}D^iu(L)D^{2k+i}u(L)+\sum_{\mathclap{\substack{k=1\\ 2k+i\geq l}}}^{l-i}(-1)^{k+1}D^iu(L)D^{2k+i}u(L)\Big]\\&-\frac{1}{2}\sum_{i=0}^{l-2}(D^{i+1}u(L))^2-\frac{1}{2}(D^{l}u(L))^2=\sum_{i=1}^{l-3}\,\,\,\sum_{\mathclap{\substack{k=1\\ 2k+i\leq l-1}}}(-1)^{k+1}D^iu(L)D^{2k+i}u(L)\\&+\sum_{i=1}^{l-1}\,\,\,\sum_{\mathclap{\substack{k=1\\ 2k+i\geq l}}}^{l-i}\,\,\,\sum_{j=1}^{l-1}(-1)^{k+1}b_{2k+i,j}D^iu(L)D^ju(L)
-\frac{1}{2}\sum_{i=0}^{l-2}(D^{i+1}u(L))^2\\&-\frac{1}{2}\Big(\sum_{j=1}^{l-1}b_{lj}(D^{j}u(L))\Big)^2=\sum_{i=1}^{l-1}\Big(\sum_{\mathclap{\substack{k=1\\ 2k+i\geq l}}}^{l-i}(-1)^{k+1}b_{2k+i,i}-\frac{1}{2}-\frac{b_{li}^2}{2}\Big)(D^iu(L))^2\\&+\sum_{i=1}^{l-3}\,\,\,\sum_{\mathclap{\substack{k=1\\ 2k+i\leq l-1}}}(-1)^{k+1}D^iu(L)D^{2k+i}u(L)\\&+\sum_{\mathclap{\substack{i,j=1\\ i\neq j}}}^{l-1}\Big(\sum_{\mathclap{\substack{k=1\\ 2k+i\geq l}}}^{l-i}(-1)^{k+1}b_{2k+i,j}\Big)D^iu(L)D^ju(L)-\frac{1}{2}\sum_{\mathclap{\substack{i,j=1\\ i\neq j}}}^{l-1}b_{li}b_{lj}D^iu(L)D^ju(L).
\end{align*}
We deduce 
\begin{equation}\label{f16}
I_1=\sum_{i=1}^{l-3}\,\,\,\sum_{\mathclap{\substack{k=1\\ 2k+i\leq l-1}}}(-1)^{k+1}D^iu(L)D^{2k+i}u(L)\geq \frac{3-l}{2}\sum_{i=1}^{l-1}(D^iu(L))^2.
\end{equation}
The proof is an induction on $l$. For $l=4$, we have
\begin{align*}
Du(L)D^3u(L)\geq -\frac{1}{2}\sum_{i=1}^{3}(D^iu(L))^2= \frac{3-4}{2}\sum_{i=1}^{4-1}(D^iu(L))^2.
\end{align*}
Assume assertion \eqref{f16} is valid for some integer $m\geq 4$. Then 
\begin{align*}
&\sum_{i=1}^{m-2}\,\,\,\sum_{\mathclap{\substack{k=1\\ 2k+i\leq m}}}(-1)^{k+1}D^iu(L)D^{2k+i}u(L)=\sum_{i=1}^{m-3}\,\,\,\sum_{\mathclap{\substack{k=1\\ 2k+i\leq m}}}(-1)^{k+1}D^iu(L)D^{2k+i}u(L)\\&+D^{m-2}u(L)D^mu(L)=\sum_{i=1}^{m-3}\,\,\,\sum_{\mathclap{\substack{k=1\\ 2k+i\leq m-1}}}(-1)^{k+1}D^iu(L)D^{2k+i}u(L)\\&+\sum_{i=1}^{m-3}\,\,\,\sum_{\mathclap{\substack{k=1\\ 2k+i= m}}}(-1)^{k+1}D^iu(L)D^{m}u(L)+D^{m-2}u(L)D^mu(L)\\& \geq \frac{3-m}{2}\sum_{i=1}^{m-1}(D^iu(L))^2-\frac{1}{2}\sum_{i=1}^{m-3}(D^iu(L))^2+\frac{3-m}{2}(D^mu(L))^2\\&-\frac{1}{2}\sum_{i=m-2}^{m}(D^iu(L))^2\geq \Big(\frac{3-m}{2}-\frac{1}{2}\Big)\sum_{i=1}^{m-3}(D^iu(L))^2\\&+\Big(\frac{3-m}{2}-\frac{1}{2}\Big)\sum_{i=m-2}^{m-1}(D^iu(L))^2+\Big(\frac{3-m}{2}-\frac{1}{2}\Big)(D^mu(L))^2\\&=\frac{2-m}{2}\sum_{i=1}^{m}(D^iu(L))^2.
\end{align*}
This proves \eqref{f16} for all $l\geq4$. 

For $i$, $j$ fixed, by the Cauchy inequality, we obtain
\begin{align*}
&\Big(\sum_{\mathclap{\substack{k=1\\ 2k+i\geq l}}}^{l-i}(-1)^{k+1}b_{2k+i,j}\Big)D^iu(L)D^ju(L)\\&\geq -\frac{1}{2}\Big(\sum_{\mathclap{\substack{k=1\\ 2k+i\geq l}}}^{l-i}|b_{2k+i,j}|\Big)^2(D^iu(L))^2-\frac{1}{2}(D^ju(L))^2.
\end{align*}
Summing over $i,j=1,\cdots, l-1$ with $i\neq j$, we get
\begin{align}
&I_2=\sum_{\mathclap{\substack{i,j=1\\ i\neq j}}}^{l-1}\Big(\sum_{\mathclap{\substack{k=1\\ 2k+i\geq l}}}^{l-i}(-1)^{k+1}b_{2k+i,j}\Big)D^iu(L)D^ju(L)\notag\\&\geq -\frac{1}{2}\sum_{i=1}^{l-1}\Big[\sum_{\mathclap{\substack{j=1\\ j\neq i}}}^{l-1}\Big(\sum_{\mathclap{\substack{k=1\\ 2k+i\geq l}}}^{l-i}|b_{2k+i,j}|\Big)^2+l-2\Big](D^iu(L))^2.\label{f17}
\end{align}
It is easy to see that 
\begin{equation*}
	I_3=-\frac{1}{2}\sum_{\mathclap{\substack{i,j=1\\ i\neq j}}}^{l-1}b_{li}b_{lj}D^iu(L)D^ju(L)\geq \frac{2-l}{2}\sum_{i=1}^{l-1}b_{li}^2(D^iu(L))^2.
\end{equation*}
Substituting $I_1+I_2+I_3$ into $I_L$, we conclude
\begin{align*}
&I_L\geq \sum_{i=1}^{l-1}\Big[\sum_{\mathclap{\substack{k=1\\ 2k+i\geq l}}}^{l-i}(-1)^{k+1}b_{2k+i,i}+(2-l)\\&+\frac{(1-l)}{2}b_{li}^2-\frac{1}{2}\sum_{\mathclap{\substack{j=1\\ j\neq i}}}^{l-1}\Big(\sum_{\mathclap{\substack{k=1\\ 2k+i\geq l}}}^{l-i}|b_{2k+i,j}|\Big)^2\Big](D^iu(L))^2.
\end{align*}
Hence, for $I_L\geq 0$, the coefficients $b_{ij}$ must satisfy
\begin{align}
&B_i=\sum_{\mathclap{\substack{k=1\\ 2k+i\geq l}}}^{l-i}(-1)^{k+1}b_{2k+i,i}+(2-l)+\frac{(1-l)}{2}b_{li}^2\notag\\&-\frac{1}{2}\sum_{\mathclap{\substack{j=1\\ j\neq i}}}^{l-1}\Big(\sum_{\mathclap{\substack{k=1\\ 2k+i\geq l}}}^{l-i}|b_{2k+i,j}|\Big)^2>0, \,\,\, i=1, \ldots, l-1.\label{f18}
\end{align}
This implies 
\begin{align*}
&b_{l+1,l-1}>l-2,\notag\\& b_{l+j,l-j}>\frac{1}{2}\Big(\sum_{m=1}^{\frac{j-1}{2}}|b_{l+2m-1,l-2m+1}|\Big)^2+l-2, \,\,\, \underbrace{j=3,\ldots, l-1}_{(j\,\, \mbox{odd})},\notag\\
&b_{l+2,l-2}<2-l, \\& b_{l+j,l-j}<-\frac{1}{2}\Big(\sum_{m=1}^{\frac{j}{2}-1}|b_{l+2m,l-2m}|\Big)^2+2-l, \,\,\, \underbrace{j=4,\ldots, l-1}_{(j\,\, \mbox{even})}\notag
\end{align*}
and the remaining coefficients of \eqref{f18} should be sufficiently small or zero.
For simplicity, we consider these coefficients equal to zero and get the following boundary conditions at $x=L$:
\begin{align*}
&u(L)=D^lu(L)=0,\\
&D^{l+j}u(L)=b_{l+j,l-j}D^{l-j}u(L), \,\,\, j=1, \ldots, l-1.
\end{align*} Assumptions \eqref{f18} become
\begin{align}\label{f22}
&B_{l-j}=b_{l+j,l-j}-\frac{1}{2}\Big(\sum_{m=1}^{\frac{j-1}{2}}|b_{l+2m-1,l-2m+1}|\Big)^2+2-l>0, \,\,\, \underbrace{j=3,\ldots, l-1}_{(j\,\, \mbox{odd})},\notag\\
& B_{l-j}=-b_{l+j,l-j}-\frac{1}{2}\Big(\sum_{m=1}^{\frac{j}{2}-1}|b_{l+2m,l-2m}|\Big)^2+2-l>0, \,\,\, \underbrace{j=4,\ldots, l-1}_{(j\,\, \mbox{even})},\notag\\& B_{l-2}=-b_{l+2,l-2}+2-l>0,\,\,\, B_{l-1}=b_{l+1,l-1}+2-l>0. 
\end{align}
{\bf Conditions at $\mathbf{x=0}$:} Substituting \eqref{f4}-\eqref{f5} into \eqref{f15}, we get
\begin{align*}
&I_0=\sum_{i=0}^{l-1}D^{i}u(0)\Big(\sum_{k=1}^{l-i}(-1)^{k}D^{2k+i}u(0)\Big)+\frac{1}{2}\sum_{i=0}^{l-1}(D^{i+1}u(0))^2\\&=\sum_{i=1}^{l-1}\Big[\sum_{\mathclap{\substack{k=1\\ 2k+i\leq l}}}(-1)^{k}D^iu(0)D^{2k+i}u(0)+\sum_{\mathclap{\substack{k=1\\ 2k+i\geq l+1}}}^{l-i}(-1)^{k}D^iu(0)D^{2k+i}u(0)\Big]\\&+\frac{1}{2}\sum_{i=0}^{l-1}(D^{i+1}u(0))^2=\sum_{i=1}^{l-2}\,\,\sum_{\mathclap{\substack{k=1\\ 2k+i\leq l}}}(-1)^{k}D^iu(0)D^{2k+i}u(0)\\&+\sum_{i=1}^{l-1}\,\,\sum_{\mathclap{\substack{k=1\\ 2k+i\geq l+1}}}^{l-i}\,\,\sum_{j=1}^{l}(-1)^{k}a_{2k+i,j}D^iu(0)D^ju(0)+\frac{1}{2}\sum_{i=0}^{l-1}(D^{i+1}u(0))^2\\&=\sum_{i=1}^{l-1}\Big(\sum_{\mathclap{\substack{k=1\\ 2k+i\geq l+1}}}^{l-i}(-1)^{k}a_{2k+i,i}+\frac{1}{2}\Big)(D^iu(0))^2+\frac{1}{2}(D^lu(0))^2\\&+\sum_{i=1}^{l-2}\,\,\sum_{\mathclap{\substack{k=1\\ 2k+i\leq l}}}(-1)^{k}D^iu(0)D^{2k+i}u(0)+\sum_{\mathclap{\substack{i,j=1\\ i\neq j}}}^{l-1}\,\,\Big(\sum_{\mathclap{\substack{k=1\\ 2k+i\geq l+1}}}^{l-i}(-1)^{k}a_{2k+i,j}\Big)D^iu(0)D^ju(0)\\&+\sum_{i=1}^{l-1}\Big(\sum_{\mathclap{\substack{k=1\\ 2k+i\geq l+1}}}^{l-i}(-1)^{k}a_{2k+i,l}\Big)D^iu(0)D^lu(0).
\end{align*}
Making use of \eqref{f16} and the Cauchy inequality with an arbitrary $\varepsilon>0$, we obtain
\begin{align*}
&I_1=\sum_{i=1}^{l-2}\,\,\sum_{\mathclap{\substack{k=1\\ 2k+i\leq l}}}(-1)^{k}D^iu(0)D^{2k+i}u(0)=\sum_{i=1}^{l-3}\,\,\,\sum_{\mathclap{\substack{k=1\\ 2k+i\leq l-1}}}(-1)^{k}D^iu(0)D^{2k+i}u(0)
\end{align*}
\begin{align*}
&+\sum_{i=1}^{l-3}\,\,\sum_{\mathclap{\substack{k=1\\ 2k+i=l}}}(-1)^{k}D^iu(0)D^{l}u(0)-D^{l-2}u(0)D^lu(0)\geq \frac{3-l}{2}\sum_{i=1}^{l-1}(D^iu(0))^2\\&-\frac{\varepsilon}{2}\sum_{i=1}^{l-3}(D^iu(0))^2+\frac{3-l}{2\varepsilon}(D^lu(0))^2-\frac{\varepsilon}{2}(D^{l-2}u(0)+D^{l-1}u(0))\\&-\frac{1}{2\varepsilon}(D^lu(0))^2=\Big(\frac{3-l-\varepsilon}{2}\Big)\sum_{i=1}^{l-1}(D^iu(0))^2+\frac{2-l}{2\varepsilon}(D^lu(0))^2.
\end{align*}
Taking $\varepsilon=2(l-2)$, we conclude
\begin{align*}
&I_1\geq \frac{7-3l}{2}\sum_{i=1}^{l-1}(D^iu(0))^2-\frac{1}{4}(D^lu(0))^2.
\end{align*}
Acting as by the proof of \eqref{f17}, we obtain
\begin{align*}
&I_2=\sum_{\mathclap{\substack{i,j=1\\ i\neq j}}}^{l-1}\,\,\Big(\sum_{\mathclap{\substack{k=1\\ 2k+i\geq l+1}}}^{l-i}(-1)^{k}a_{2k+i,j}\Big)D^iu(0)D^ju(0)\\&\geq -\frac{1}{2}\sum_{i=1}^{l-1}\Big[\sum_{\mathclap{\substack{j=1\\ j\neq i}}}^{l-1}\,\,\Big(\sum_{\mathclap{\substack{k=1\\ 2k+i\geq l+1}}}^{l-i}|a_{2k+i,j}|\Big)^2+l-2\Big](D^iu(0))^2.
\end{align*}
Applying the Cauchy inequality for $i$ fixed, we get
\begin{align*}
&\Big(\sum_{\mathclap{\substack{k=1\\ 2k+i\geq l+1}}}^{l-i}(-1)^{k}a_{2k+i,l}\Big)D^iu(0)D^lu(0)\\&\geq -\frac{1}{2}\sum_{\mathclap{\substack{k=1\\ 2k+i\geq l+1}}}^{l-i}|a_{2k+i,l}|(D^iu(0))^2-\frac{1}{2}\sum_{\mathclap{\substack{k=1\\ 2k+i\geq l+1}}}^{l-i}|a_{2k+i,l}|(D^lu(0))^2.
\end{align*}
Summing over $i=1,\ldots, l-1$, we find
\begin{align*}
&I_3=\sum_{i=1}^{l-1}\Big(\sum_{\mathclap{\substack{k=1\\ 2k+i\geq l+1}}}^{l-i}(-1)^{k}a_{2k+i,l}\Big)D^iu(0)D^lu(0)\\&\geq -\frac{1}{2}\sum_{i=1}^{l-1}\Big(\sum_{\mathclap{\substack{k=1\\ 2k+i\geq l+1}}}^{l-i}|a_{2k+i,l}|\Big)(D^iu(0))^2-\frac{1}{2}\Big[\sum_{i=1}^{l-1}\Big(\sum_{\mathclap{\substack{k=1\\ 2k+i\geq l+1}}}^{l-i}|a_{2k+i,l}|\Big)\Big](D^lu(0))^2.
\end{align*}
Substituting $I_1+I_2+I_3$ into $I_0$, we conclude
\begin{align*}
&I_0\geq \sum_{i=1}^{l-1}\,\Big[\sum_{\mathclap{\substack{k=1\\ 2k+i\geq l+1}}}^{l-i}(-1)^{k}a_{2k+i,i}+(5-2l)\\&-\frac{1}{2}\sum_{\mathclap{\substack{j=1\\ j\neq i}}}^{l-1}\,\,\Big(\sum_{\mathclap{\substack{k=1\\ 2k+i\geq l+1}}}^{l-i}|a_{2k+i,j}|\Big)^2-\frac{1}{2}\sum_{\mathclap{\substack{k=1\\ 2k+i\geq l+1}}}^{l-i}|a_{2k+i,l}|\Big](D^iu(0))^2\\&+\Big[\frac{1}{4}-\frac{1}{2}\sum_{i=1}^{l-1}\Big(\,\sum_{\mathclap{\substack{k=1\\ 2k+i\geq l+1}}}^{l-i}|a_{2k+i,l}|\Big)\Big](D^lu(0))^2.
\end{align*}
Obviously, $I_0\geq 0$ if the coefficients $a_{ij}$ satisfy the following conditions:
\begin{align}
&A_i=\sum_{\mathclap{\substack{k=1\\ 2k+i\geq l+1}}}^{l-i}(-1)^{k}a_{2k+i,i}+(5-2l)-\frac{1}{2}\sum_{\mathclap{\substack{j=1\\ j\neq i}}}^{l-1}\,\,\Big(\sum_{\mathclap{\substack{k=1\\ 2k+i\geq l+1}}}^{l-i}|a_{2k+i,j}|\Big)^2\notag\\&-\frac{1}{2}\sum_{\mathclap{\substack{k=1\\ 2k+i\geq l+1}}}^{l-i}|a_{2k+i,l}|>0, \,\,\, i=1, \ldots, l-1,\label{f23}\\&A_l=\frac{1}{4}-\frac{1}{2}\sum_{i=1}^{l-1}\Big(\,\sum_{\mathclap{\substack{k=1\\ 2k+i\geq l+1}}}^{l-i}|a_{2k+i,l}|\Big)>0. \label{f24}
\end{align}
This implies 
\begin{align*}
&a_{l+1,l-1}<5-2l,\notag\\& a_{l+j,l-j}<-\frac{1}{2}\Big(\sum_{m=1}^{\frac{j-1}{2}}|a_{l+2m-1,l-2m+1}|\Big)^2+5-2l, \,\,\, \underbrace{j=3,\ldots, l-1}_{(j\,\, \mbox{odd})},\notag\\
&a_{l+2,l-2}>2l-5, \\& a_{l+j,l-j}>\frac{1}{2}\Big(\sum_{m=1}^{\frac{j}{2}-1}|a_{l+2m,l-2m}|\Big)^2+2l-5, \,\,\, \underbrace{j=4,\ldots, l-1}_{(j\,\, \mbox{even})}\notag
\end{align*}
and the remaining coefficients of \eqref{f23}-\eqref{f24} should be sufficiently small or zero.
Similarly, we consider these coefficients equal to zero and get the following boundary conditions at $x=0$:
\begin{align*}
&u(0)=0,\\
&D^{l+j}u(0)=a_{l+j,l-j}D^{l-j}u(0), \,\,\, j=1, \ldots, l-1.
\end{align*}
Assumptions \eqref{f23}-\eqref{f24} become
\begin{align}\label{fo26}
&A_{l-j}=-a_{l+j,l-j}-\frac{1}{2}\Big(\sum_{m=1}^{\frac{j-1}{2}}|a_{l+2m-1,l-2m+1}|\Big)^2-2l+5>0, \,\,\, \underbrace{j=3,\ldots, l-1}_{(j\,\, \mbox{odd})},\notag\\
& A_{l-j}=a_{l+j,l-j}-\frac{1}{2}\Big(\sum_{m=1}^{\frac{j}{2}-1}|a_{l+2m,l-2m}|\Big)^2-2l+5>0, \,\,\, \underbrace{j=4,\ldots, l-1}_{(j\,\, \mbox{even})},\notag\\& A_{l-2}=a_{l+2,l-2}-2l+5>0,\,\,\,A_{l-1}=-a_{l+1,l-1}-2l+5>0, \notag\\& A_l=\frac{1}{4}.
\end{align}
\begin{rem}
	We call \eqref{f5}-\eqref{f6} general boundary conditions because they follow from a more general form \cite{lar}:
	\begin{align} \label{g27}
		&\sum_{i=1}^{2l-1}\alpha_{ki}D^iu(0)=0,\,\,\, k=1,\ldots, l-1,\\
		&\sum_{i=1}^{2l-1}\beta_{ki}D^iu(L)=0,\,\,\, k=1,\ldots, l, \label{g28}
	\end{align}
where $\alpha_{ki}, \beta_{ki}$ are real numbers. Write \eqref{g27}-\eqref{g28} as
\begin{align*} 
	&\sum_{i=l+1}^{2l-1}\alpha_{ki}D^iu(0)=-\sum_{j=1}^{l}\alpha_{kj}D^ju(0),\,\,\, k=1,\ldots, l-1,\\
	&\sum_{i=l}^{2l-1}\beta_{ki}D^iu(L)=-\sum_{j=1}^{l-1}\beta_{kj}D^ju(L),\,\,\, k=1,\ldots, l.
\end{align*}
If $\det(\alpha_{ki})\neq 0$, then $D^iu(0)=\frac{\det\widehat{(\alpha_{ki})}}{\det(\alpha_{ki})},\, i=l+1,\ldots, 2l-1$, where $\widehat{(\alpha_{ki})}$ is the matrix formed by replacing the ith column of $(\alpha_{ki})$ by $-\sum_{j=1}^{l}\alpha_{kj}D^ju(0)$. After simple calculations, we arrive to \eqref{f5}. Similarly, if $\det(\beta_{ki})\neq 0$, then $D^iu(L)=\frac{\det\widehat{(\beta_{ki})}}{\det(\beta_{ki})},\,\, i=l,\ldots, 2l-1$, where $\widehat{(\beta_{ki})}$ is the matrix formed by replacing the ith column of $(\beta_{ki})$ by $-\sum_{j=1}^{l-1}\beta_{kj}D^ju(L)$ and we come to \eqref{f6}.
\end{rem}
\begin{rem} All results established in this paper are already proven for the case $l=1$, see \cite{larluch0}. From here on, we will  consider $l\geq 2$.	
\end{rem}
\section{Existence and uniqueness of regular solutions }
For a real $\lambda>0$, consider the  equation
\begin{equation}\label{s1}
\lambda u+\sum_{j=1}^{l}(-1)^{j+1}D^{2j+1}u=f(x), \,\,\, x\in(0,L)
\end{equation}
subject to boundary conditions: 
\begin{align}\label{s2}
\begin{split} 
&u(0)=u(L)=D^lu(L)=0,\\
&D^{l+j}u(0)=a_{l+j,l-j}D^{l-j}u(0), \,\,\, j=1, \ldots, l-1,\\
&D^{l+j}u(L)=b_{l+j,l-j}D^{l-j}u(L), \,\,\, j=1, \ldots, l-1,
\end{split}
\end{align}
where $b_{l+j,l-j},\,a_{l+j,l-j},\,j=1, \ldots, l-1$ satisfy \eqref{f9},\eqref{f13},\eqref{f22},\eqref{fo26}, for all $l\geq 2$ and $f$ is a given function.
\begin{teo}
Let $f\in L^2(0,L)$. Then problem \eqref{s1}-\eqref{s2} admits a unique regular solution $u=u(x)\in H^{2l+1}(0,L)$ such that
\begin{equation}\label{s3}
\|u\|_{H^{2l+1}}\leq C\|f\|
\end{equation}
with a constant $C$ depending only on $L$, $l$, $\lambda$, $a_{l+j,l-j}$, $b_{l+j,l-j}$.
\end{teo}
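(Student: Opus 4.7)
The plan is to (i) derive \textit{a priori} estimates for classical solutions of \eqref{s1}--\eqref{s2}, (ii) construct a solution via an approximation scheme, and (iii) conclude uniqueness from the $L^2$ estimate applied to the difference of two candidate solutions.

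For the a priori bounds, I would first multiply \eqref{s1} by $u$ and integrate over $(0,L)$:
\[
\lambda\|u\|^2+\sum_{j=1}^{l}(-1)^{j+1}(D^{2j+1}u,u)=(f,u).
\]
Identity \eqref{a3} of Lemma 2.2 reduces the second term to a sum of boundary contributions; inserting the conditions \eqref{s2} and invoking the sign inequalities \eqref{f9}, \eqref{f13}, \eqref{f22}, \eqref{fo26} on $a_{l+j,l-j}$ and $b_{l+j,l-j}$ (precisely those derived in Section 3), every boundary contribution is non-negative. Therefore $\lambda\|u\|^2\le\|f\|\|u\|$, so $\|u\|\le\lambda^{-1}\|f\|$.

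Next, I would bootstrap to $H^{2l+1}$ by using \eqref{s1} itself. Solving for the highest-order derivative,
\[
(-1)^{l+1}D^{2l+1}u=f-\lambda u-\sum_{j=1}^{l-1}(-1)^{j+1}D^{2j+1}u,
\]
taking $L^2$-norms, and applying the Nirenberg inequality \eqref{a5} with $m=2l+1$, $i=2j+1$,
\[
\|D^{2j+1}u\|\le C_1\|D^{2l+1}u\|^{(2j+1)/(2l+1)}\|u\|^{1-(2j+1)/(2l+1)}+C_2\|u\|,
\]
Young's inequality with a small parameter (valid since $(2j+1)/(2l+1)<1$ for $j\le l-1$) absorbs the top-order term into the left-hand side. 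One obtains $\|D^{2l+1}u\|\le C\|f\|$, and a second application of \eqref{a5} controls every intermediate $\|D^iu\|$ for $0\le i\le 2l$, producing \eqref{s3}.

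For existence, I would use elliptic regularization: replace \eqref{s1} by $\lambda u^\varepsilon+(-1)^l\varepsilon D^{2l+2}u^\varepsilon+\sum_{j=1}^{l}(-1)^{j+1}D^{2j+1}u^\varepsilon=f$ augmented with two supplementary boundary conditions (e.g.~$D^{l+1}u^\varepsilon(0)=D^{l+1}u^\varepsilon(L)=0$) chosen so that the resulting $(2l+2)$-th order problem is coercive on a suitable subspace of $H^{l+1}$ and so that no detrimental boundary terms are generated when pairing with $u^\varepsilon$. A Lax--Milgram argument combined with standard elliptic regularity yields $u^\varepsilon\in H^{2l+2}$, and the estimates above adapt verbatim to deliver $\|u^\varepsilon\|_{H^{2l+1}}\le C\|f\|$ uniformly in $\varepsilon$. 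Weak-$H^{2l+1}$ compactness together with continuity of the trace operators then gives a limit $u$ solving \eqref{s1}--\eqref{s2}. Uniqueness is immediate: the difference $w=u_1-u_2$ of two solutions satisfies the homogeneous problem, so the $L^2$ estimate forces $w\equiv 0$. The most delicate step, in my view, is the selection of the supplementary boundary conditions for $u^\varepsilon$: they must simultaneously ensure coercivity at fixed $\varepsilon>0$ and preserve the non-negativity of the boundary form $I$ on which the a priori estimate hinges; once this is arranged, the remaining arguments are standard.
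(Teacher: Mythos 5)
Your a priori estimate is sound, and in fact more direct than the paper's: the authors first multiply by $(1+x)u$ and use the identity \eqref{a4} to extract $\sum_j\frac{2j+1}{2}\|D^ju\|^2$ and hence an $H^l$ bound, and only then bootstrap to $H^{2l+1}$ via the equation and \eqref{a5}; your version solves \eqref{s1} for $D^{2l+1}u$ and absorbs the lower-order odd derivatives by Nirenberg--Gagliardo plus Young, which works for the linear problem and skips the weighted multiplier entirely. The uniqueness argument via the difference of two solutions and \eqref{s10} coincides with the paper's.

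The genuine gap is in the existence part. The paper does not regularize at all: since \eqref{s1}--\eqref{s2} is a two-point boundary value problem for a linear ODE with constant coefficients, it invokes the Fredholm alternative for such problems (via Green's functions, \cite{cabada}): the problem has a unique classical solution for every continuous $f$ if and only if the homogeneous problem has only the trivial solution. The latter is exactly what the energy identity together with \eqref{a3} and the sign conditions \eqref{f9}, \eqref{f13}, \eqref{f22}, \eqref{fo26} delivers, so existence is obtained in one stroke and the general $f\in L^2$ is handled by density. Your elliptic regularization, by contrast, is only a programme: the step you yourself flag as most delicate --- choosing the two supplementary boundary conditions so that the $(2l+2)$-order problem is coercive while the boundary form $I$ stays non-negative --- is precisely the content of an existence proof by this route, and it is not carried out. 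Two concrete obstacles: (i) the sign of your regularizing term is wrong, since integrating by parts $l+1$ times gives $(D^{2l+2}u,u)=(-1)^{l+1}\|D^{l+1}u\|^2+\text{boundary terms}$, so coercivity requires $(-1)^{l+1}\varepsilon D^{2l+2}u^{\varepsilon}$, not $(-1)^{l}\varepsilon D^{2l+2}u^{\varepsilon}$; (ii) a Lax--Milgram formulation on a subspace of $H^{l+1}$ cannot impose the conditions \eqref{s2} as essential constraints, because they involve traces of derivatives up to order $2l-1$, which are not defined on $H^{l+1}$; they would have to emerge as natural boundary conditions of a bilinear form built to encode the couplings $D^{l+j}u=a_{l+j,l-j}D^{l-j}u$, and verifying that this can be done compatibly with coercivity is a substantial piece of work that the proposal omits. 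As written, the existence argument is therefore incomplete; replacing it by the Fredholm-alternative reduction (existence $\Leftrightarrow$ triviality of the homogeneous problem, the latter following from your own $L^2$ estimate) closes the gap with the machinery you already have.
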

\begin{proof} Suppose initially $f\in C([0,L])$ and consider the homogeneous equation 
\begin{equation}\label{st4}
\lambda u+\sum_{j=1}^{l}(-1)^{j+1}D^{2j+1}u=0 \,\,\,\mbox{in}\,\,\,(0,L)
\end{equation}
subject to boundary conditions \eqref{s2}. It is known, see \cite{cabada}, that \eqref{s1}-\eqref{s2} has a unique classical solution if and only if \eqref{st4}-\eqref{s2} has only the trivial solution. Let $u\in C^{2l+1}([0,L])$ be a nontrivial solution of \eqref{st4}-\eqref{s2}, then multiplying \eqref{st4} by $u$ and integrating over $(0,L)$, we obtain
\begin{equation*}
\lambda \|u\|^2+\sum_{j=1}^{l}(-1)^{j+1}(D^{2j+1}u,u)=0.
\end{equation*}
Making use of \eqref{a3} and boundary conditions \eqref{s2} with $b_{l+j,l-j},\,a_{l+j,l-j},\,j=1, \ldots, l-1$ satisfying \eqref{f9},\eqref{f13},\eqref{f22},\eqref{fo26}, we get 
\begin{align}\label{s8}
\sum_{j=1}^{l}(-1)^{j+1}(D^{2j+1}u,u)\geq \sum_{i=1}^{l-1}B_i(D^iu(L))^2+\sum_{i=1}^{l}A_i(D^iu(0))^2\geq 0
\end{align}
for all $l\geq 2$, which implies $\lambda \|u\|^2\leq 0$. Since $\lambda >0$, it follows that $u\equiv 0$ and \eqref{s1}-\eqref{s2} has a unique classical solution $u\in C^{2l+1}([0,L])$.
\subsection*{Estimate 1} Multiply \eqref{s1} by $u$ and integrate over $(0,L)$ to obtain
\begin{equation*}
\lambda \|u\|^2+\sum_{j=1}^{l}(-1)^{j+1}(D^{2j+1}u,u)=(f,u).
\end{equation*}
Taking $M_1=\min\limits_{i\in\{1,\ldots, l-1\}}\{B_i, A_i, A_l\}$ in \eqref{s8} and making use of  the Cauchy-Schwarz inequality, we get
\begin{equation}\label{s9}
\lambda \|u\|^2+M_1\Big(\sum_{i=1}^{l-1}\Big[(D^iu(L))^2+(D^iu(0))^2\Big]+(D^lu(0))^2\Big)\leq \|f\|\|u\|
\end{equation}
which implies
\begin{equation}\label{s10}
\|u\|\leq \frac{1}{\lambda}\|f\|.
\end{equation}
Substituting \eqref{s10} into \eqref{s9}, we find
\begin{equation}\label{s11}
\sum_{i=1}^{l-1}\Big[(D^iu(L))^2+(D^iu(0))^2\Big]+(D^lu(0))^2\leq \frac{1}{\lambda M_1}\|f\|^2.
\end{equation}
\subsection*{Estimate 2} Multiply \eqref{s1} by $(1+x)u$ and integrate over $(0,L)$ to obtain
\begin{equation}\label{s12}
\lambda (1+x,u^2)+\sum_{j=1}^{l}(-1)^{j+1}(D^{2j+1}u,(1+x)u)=(f,(1+x)u).
\end{equation}
By the Cauchy inequality with an arbitrary $\varepsilon>0$, we estimate
\begin{equation}\label{s13}
(f,(1+x)u)\leq \frac{\varepsilon}{2}(1+x,u^2)+\frac{1}{2\varepsilon}(1+x,f^2).
\end{equation}
Substituting \eqref{s13} into \eqref{s12} and taking $\varepsilon=\lambda$, we get 
\begin{equation}\label{s14}
\frac{\lambda}{2}\|u\|^2+\sum_{j=1}^{l}(-1)^{j+1}(D^{2j+1}u,(1+x)u)\leq \frac{1+L}{2\lambda}\|f\|^2.
\end{equation}
Making use of \eqref{a3},\eqref{a4} and boundary conditions \eqref{s2} with $b_{l+j,l-j}$, $\,a_{l+j,l-j}$, $j=1, \ldots, l-1$ satisfying \eqref{f9},\eqref{f13},\eqref{f22},\eqref{fo26}, we find
\begin{align*}
&I=\sum_{j=1}^{l}(-1)^{j+1}(D^{2j+1}u,(1+x)u)=\sum_{i=0}^{l-1}(1+x)D^{i}u\Big(\sum_{k=1}^{l-i}(-1)^{k+1}D^{2k+i}u\Big)\Big|_{0}^{L}\notag\\&+\sum_{i=0}^{l-1}(1+i)D^{i}u\Big(\sum_{k=1}^{l-i}(-1)^{k}D^{2k+i-1}u\Big)\Big|_{0}^{L} -\frac{(1+x)}{2}\sum_{j=1}^{l}(D^{j}u)^2\Big|_{0}^{L}\notag\\ &+\sum_{j=1}^{l}\frac{(2j+1)}{2}\|D^ju\|^2\geq (1+L)\sum_{i=1}^{l-1}B_i(D^iu(L))^2+\sum_{i=1}^{l}A_i(D^iu(0))^2\\&+\sum_{i=1}^{l-1}(1+i)D^{i}u\Big(\sum_{k=1}^{l-i}(-1)^{k}D^{2k+i-1}u\Big)\Big|_{0}^{L}+\sum_{j=1}^{l}\frac{(2j+1)}{2}\|D^ju\|^2.
\end{align*}
Substituting $I$ into \eqref{s14}, we obtain
\begin{align}\label{a}
&\frac{\lambda}{2}\|u\|^2+\sum_{j=1}^{l}\frac{(2j+1)}{2}\|D^ju\|^2+(1+L)\sum_{i=1}^{l-1}B_i(D^iu(L))^2+\sum_{i=1}^{l}A_i(D^iu(0))^2\notag\\&\leq \frac{1+L}{2\lambda}\|f\|^2-\sum_{i=1}^{l-1}(1+i)D^{i}u\Big(\sum_{k=1}^{l-i}(-1)^{k}D^{2k+i-1}u\Big)\Big|_{0}^{L}.
\end{align}
Making use of \eqref{s2} and applying the Cauchy inequality, we find
\begin{align}\label{b}
&-\sum_{i=1}^{l-1}(1+i)D^{i}u\Big(\sum_{k=1}^{l-i}(-1)^{k}D^{2k+i-1}u\Big)\Big|_{0}^{L}\leq \sum_{i=1}^{l-1}(1+i)|D^{i}u(L)|\notag\\&\times\Big(\sum_{k=1}^{l-i}|D^{2k+i-1}u(L)|\Big)+\sum_{i=1}^{l-1}(1+i)|D^{i}u(0)|\Big(\sum_{k=1}^{l-i}|D^{2k+i-1}u(0)|\Big)\notag\\&\leq M_2\Big(\sum_{i=1}^{l-1}\Big[(D^iu(L))^2+(D^iu(0))^2\Big]+(D^lu(0))^2\Big),
\end{align}
where $M_2$ is the maximum among all the coefficients of the derivatives $(D^lu(0))^2$, $(D^iu(0))^2$, $(D^iu(L))^2$, $i=1, \ldots, l-1$.
Substituting \eqref{b} into \eqref{a} and taking into account \eqref{s11}, we get
\begin{align*}
&\frac{\lambda}{2}\|u\|^2+\sum_{j=1}^{l}\frac{(2j+1)}{2}\|D^ju\|^2\leq \Big(\frac{1+L}{2\lambda}+\frac{M_2}{\lambda M_1}\Big)\|f\|^2.
\end{align*}
Therefore 
\begin{equation}\label{s15}
\|u\|_{H^l}\leq C \|f\|,
\end{equation}
where $C$ is a constant depending only on $L$, $l$, $\lambda$, $a_{l+j,l-j}$, $b_{l+j,l-j}$.

Finally, returning to \eqref{s1} and making use of \eqref{a5}, we conclude that
$$
\|u\|_{H^{2l+1}}\leq C \|f\|
$$
with a constant $C$ depending only on $L$, $l$, $\lambda$, $a_{l+j,l-j}$, $b_{l+j,l-j}$ (see details in \cite{larluch0}, p. 4-5). Uniqueness of $u$ follows from \eqref{s10}. In fact, such calculations must be performed for smooth solutions and the general case can be obtained using density arguments.
\end{proof}
\begin{rem} The problem \eqref{f1}-\eqref{f6} in Chapter 3 can be formulated under the following boundary conditions:
	\begin{align} \label{y}
	&D^iu(0)=\sum_{j=0}^{l}a_{ij}D^ju(0),\,\,\, i=l+1,\ldots, 2l,\\
	&D^iu(L)=\sum_{j=0}^{l-1}b_{ij}D^ju(L),\,\,\, i=l,\ldots, 2l,  \label{z}
	\end{align}
	instead of 	\eqref{f3}-\eqref{f6}. In fact, boundary conditions \eqref{f3}-\eqref{f6} are derived from \eqref{y}-\eqref{z} while one wants to study the nonlinear equation: 
	\begin{equation}\label{w}
	\lambda u+\sum_{j=1}^{l}(-1)^{j+1}D^{2j+1}u+uDu=f(x)\,\,\,x\in (0,L).
	\end{equation}
	Multiplying \eqref{w} by $u$ and integrating over $(0,L)$, we get
	$$\lambda\|u\|^2(t)+\sum_{j=1}^{l}(-1)^{j+1}(D^{2j+1}u,u)+\frac{2}{3}u^3(x)\Big|_{0}^{L}=(f,u).$$ 
	So a natural way to obtain $\|u\|\leq \frac{1}{\lambda}\|f\|$ is suppose $u(0)=u(L)=0$ and to choose $a_{ij}$, $b_{ij}$ such that $\sum_{j=1}^{l}(-1)^{j+1}(D^{2j+1}u,u)\geq 0$. Note that, assuming $u(0)=u(L)=0$, \eqref{a3} gives us $(-1)^{l+1}u(x)D^{2l}u(x)\Big|_{0}^{L}=0$. 
	This allows us to eliminate conditions at \eqref{y}-\eqref{z} when $i=2l$, getting a correct set of boundary conditions: ($l$ conditions at $x=0$ and $l+1$ conditions at $x=L$, see \cite{lar}) when $l=1$, \eqref{y}-\eqref{z} become $u(0)=u(L)=Du(L)=0$ and when $l\geq 2$, we get \eqref{f4}-\eqref{f6}. We  call \eqref{y}-\eqref{z} general boundary conditions because they follow from a more general form: (see Remark 1)
	\begin{align*} 
	&\sum_{i=0}^{2l}\alpha_{ki}D^iu(0)=0,\,\,\, k=1,\ldots, l,\\
	&\sum_{i=0}^{2l}\beta_{ki}D^iu(L)=0,\,\,\, k=1,\ldots, l+1, 
	\end{align*}
	where $\alpha_{ki}, \beta_{ki}$ are real numbers.
\end{rem}

\noindent{\bf Acknowledgements.} N. A. Larkin has been supported by Funda\c{c}\~ao Arauc\'{a}ria, Paran\'{a}, Brazil; convenio No 307/2015, Protocolo No 45.703.\\

\noindent{\bf Conflict of Interest.} The authors declare that there are no conflict of interest regarding the publication of this paper.

\end{document}